\documentclass{article}
\usepackage{amssymb}
\usepackage{amsmath}
\usepackage{amsthm}

\usepackage[usenames,dvipsnames]{pstricks}
 \usepackage{epsfig}

\newtheorem{theorem}{Theorem}[section]
\newtheorem{lemma}[theorem]{Lemma}
%

%
%
%
%

\newtheorem{example}[theorem]{\bf Example}%
\newtheorem{problem}[theorem]{\bf Problem}%
{}%

\newcommand{\N}{\mathbb{N}}
\newcommand{\G}{\mathcal{G}}

\newcommand{\fd}{\mathbb{F}}

\title{Matroid invariants and counting graph homomorphisms}
\author{Andrew Goodall\thanks{Charles University, Prague, Czech Republic. Email: \texttt{andrew@iuuk.mff.cuni.cz}. Supported by the Center of Excellence-Inst for Theor. Comp. Sci., Prague, P202/12/G061, and by Project ERCCZ LL1201 Cores.} \and Guus Regts\thanks{University of Amsterdam,  Netherlands. Email: \texttt{guusregts@gmail.com}. Supported by the European Research Council under the European Union's Seventh Framework Programme (FP7/2007-2013) / ERC grant agreement n$\mbox{}^{\circ}$ 339109, and by a NWO Veni grant.}\and Llu\'is Vena\thanks{Charles University, Prague, Czech Republic.  Email: \texttt{lluis.vena@gmail.com}.  Supported by the Center of Excellence-Inst for Theor. Comp. Sci., Prague, P202/12/G061, and by Project ERCCZ LL1201 Cores.}}

\begin{document}

\maketitle
\begin{abstract}
The number of homomorphisms from a finite graph $F$ to the complete graph $K_n$ is the evaluation of the chromatic polynomial of $F$ at $n$. Suitably scaled, this is the Tutte polynomial evaluation $T(F;1-n,0)$ and an invariant of the cycle matroid of $F$. De la Harpe and Jaeger \cite{dlHJ95} asked more generally when is it the case that a graph parameter obtained from counting homomorphisms from $F$ to a fixed graph $G$  depends only on the cycle matroid of $F$. They showed that this is true when $G$ has a generously transitive automorphism group (examples include Cayley graphs on an abelian group, and  Kneser graphs).

Using tools from multilinear algebra, we prove the converse statement, thus characterizing finite graphs $G$ for which counting homomorphisms to $G$ yields a matroid invariant.  We also extend this result to finite weighted graphs $G$ (where to count homomorphisms from $F$ to $G$ includes such problems as counting nowhere-zero flows of $F$ and evaluating the partition function of an interaction model on $F$).  

\noindent \textbf{Keywords.} graph homomorphism, graph invariant, matroid invariant

\noindent \textbf{M.S.C [2010]}  Primary   05C15, 05C60, 15A72; Secondary 52C40, 05C30, 05C31
\end{abstract}

\section{Introduction}

\subsection{Graph invariants and matroid invariants}
A graph $F$ in this paper will be finite and may have multiple edges and loops, i.e., by a graph we mean a finite multigraph. The set of vertices of $F$ is denoted by $V(F)$ and the set of edges by $E(F)$. (Parallel edges appear with multiplicity.)  
The {\em cycle matroid} of a graph $F$ is the matroid whose circuits are the edge sets of circuits in $F$. 
A loop of $F$ is a circuit of size $1$, and two parallel edges form a circuit of size $2$. A bridge in $F$ is a coloop in its cycle matroid. A matroid that is the cycle matroid of some graph is a {\em graphic matroid.} 
A standard reference for matroid theory is~\cite{O11}.

 A {\em graph invariant} is a function defined on graphs with the property that it takes the same value on isomorphic graphs. A graph invariant taking values in a field such as $\mathbb R$ is also called a {\em graph parameter}. A graph invariant taking values in a polynomial ring is a {\em graph polynomial}, important examples being the chromatic polynomial and the Tutte polynomial (for which see for example~\cite{B93,B98}).
A {\em matroid invariant} is a function defined on matroids with the property that it takes the same value on isomorphic matroids. 

Matroids were introduced by Whitney in 1935 as an abstraction of the notion of independence in linear algebra and graph theory (a subset of edges of a graph being independent if it contains no cycle). Matroid theory permits the transfer of notions defined for one type of combinatorial structure to another seemingly unrelated one. There is therefore great interest in an invariant defined for a particular combinatorial structure (such as a graph) to which there is associated a matroid (the cycle matroid of a graph), as the invariant might be extended to a larger class of matroids. 

A graph invariant whose value on a graph $F$ depends only on the underlying cycle matroid of $F$ will be called a {\em cycle matroid invariant}. A cycle matroid invariant is the restriction of a matroid invariant to graphic matroids, and for this reason de la Harpe and Jaeger~\cite{dlHJ95} use the term matroid invariant for what we call a cycle matroid invariant. The reason for our terminological difference is explained in Section~\ref{sec:concl} below. 

The Tutte polynomial of a graph $F$ is a cycle matroid invariant and the chromatic polynomial of $F$ scaled by a factor dependent only on the number of connected components of $F$ is also a cycle matroid invariant (for example trees of the same size share the same chromatic polynomial). The Tutte polynomial of $F$ may be defined in terms of the rank and size of subgraphs of $F$, which are terms defined for any matroid, and this definition can be used to define the Tutte  polynomial as a matroid invariant, whose restriction to graphic matroids is the Tutte polynomial for graphs. 
This is one reason why the Tutte polynomial has played such a central role in combinatorics: matroids encompass a great diversity of combinatorial structures and the richly developed theory of graphs has illuminated through the lens of the Tutte polynomial such areas as knot theory and statistical physics (see for example~\cite{W93}). 

In this paper we characterize a set of cycle matroid invariants defined by counting graph homomorphisms, a set that includes the chromatic polynomial and the Tutte polynomial. In contrast to the chromatic polynomial and the Tutte polynomial, however, the extension of these cycle matroid invariants to matroid invariants is not known in general, although for some of  them there are naturally defined extensions to matroids with additional structure (such as oriented matroids). We briefly discuss this question further in Section~\ref{sec:concl}. 



 \subsection{Graph homomorphisms and the question of de la Harpe and Jaeger}

For $n\in \N$, a field $\fd$ of characteristic zero, a vector $a\in (\fd^*)^{n}$ and a symmetric matrix $B\in \fd^{n\times n}$, we let $G(a,B)$ denote the vertex- and edge-weighted graph with vertex set $\{1,2,\dots, n\}$ in which vertex $i$ has weight $a_i$ and edge $ij$ has weight $B_{i,j}$. 
When $B$ has nonnegative integer entries and $a_i=1$ for each $i$ the weighted graph $G(a,B)$ can be viewed as a multigraph  that has  adjacency matrix $B$, the entry $B_{i,j}$ giving the multiplicity of the edge $ij$. 

The graph parameter defined by
\begin{equation}\label{eq:def_hom}
F\mapsto \hom(F,G(a,B))=\sum_{\phi:V(F)\to [n]}\:\:\prod_{v\in V(F)}a_{\phi(v)}\:\cdot\,\prod_{uv\in E(F)}B_{\phi(u),\phi(v)},
\end{equation}
where edges $uv\in E(F)$ are taken with multiplicity in the product, includes many important graph invariants, such as the chromatic polynomial evaluated at $n$, the flow polynomial at $n$ and evaluations of the Tutte polynomial (see Example~\ref{ex:Tutte} below), the independence polynomial, and the partition functions of other interaction models in statistical physics. See for example the survey~\cite{BCLSV06} and the recent book by Lov\'asz~\cite{L12} for the parameter ${\rm hom}(\,\cdot\,,G(a,B))$ and~\cite{B77} for partition functions of interaction models.

If $a_i=1$ for each $i$ and $B$ with entries $B_{i,j}\in\{0,1\}$ is the adjacency matrix of a graph $G$, then $\hom(F,G(a,B))$ is equal to the number of homomorphisms from $F$ to $G$. The definition of $\hom(\cdot,G(a,B))$ given in~\eqref{eq:def_hom} generalizes homomorphism counting between graphs to a weighted target graph. 

The number of connected components of a graph $F$ is denoted by $c(F)$ and its rank by $r(F)$ (equal to $|V(F)|-c(F)$). For $A\subseteq E(F)$, the rank of the subgraph $(V(F), A)$ is denoted by $r(A)$.  
\begin{example}\label{ex:Tutte} Let $G=G(\mathbf 1, (y-1)I+J)$, where $\mathbf 1$ is the $n\times 1$ vector with each entry equal to $1$, $I$ is the $n\times n$ identity matrix and $J=\mathbf 1\mathbf 1^T$. Then (see for example~\cite{dlHJ95})
$$\hom(F,G)=n^{c(F)}(y-1)^{r(F)}T\left(F;\frac{y-1+n}{y-1},y\right),$$
where $T(F;x,y)$ is the Tutte polynomial of $F$, defined by
$$T(F;x,y)=\sum_{A\subseteq E(F)}(x-1)^{r(F)-r(A)}(y-1)^{|A|-r(A)}.$$
In particular, when $y=0$ we have $G=K_n$ and $$\hom(F,K_n)=(-1)^{r(F)}n^{c(F)}T(F;1-n,0)=\chi(F;n)$$ is the chromatic polynomial of $F$ evaluated at $n$, counting the number of proper $n$-colourings of $F$. When $y=1-n$, where now $G$ is the complete graph with a loop of weight $1-n$ on each vertex, 
$$\hom(F,G)=(-1)^{r(F)}n^{|V(F)|}T(F;0,1-n)=(-1)^{|E(F)|}n^{|V(F)|}\phi(F;n),$$ 
where $\phi(F;n)$ is the flow polynomial of $F$ evaluated at $n$, counting the number of nowhere-zero $\mathbb Z_n$-flows of $F$.
\end{example}

The Tutte polynomial of a graph $F$ is an invariant of the cycle matroid of $F$. Example~\ref{ex:Tutte} motivates the following question, posed by de la Harpe and Jaeger
in~\cite{dlHJ95}:
for which (unweighted) graphs $G$ is the graph invariant
\[
h(\,\cdot\,,G): F\mapsto\frac{\hom(F,G)}{|V(G)|^{c(F)}}
\]
dependent only on the cycle matroid of $F$?

In \cite{dlHJ95} it is shown that if $G$ has a generously transitive automorphism group then $h(\cdot,G)$ is a cycle matroid invariant. 
(The automorphism group $\Gamma$ of a graph $G$ is \emph{generously transitive} if for each $u,v\in V(G)$ there exists $\gamma\in \Gamma$ such that $\gamma(u)=v$ and $\gamma(v)=u$. If a group action is generously transitive then it is also transitive.)
We extend their result here to edge-weighted graphs $G$ and moreover show that the sufficient condition is also necessary, thus answering their question.
\begin{theorem}\label{thm:matroid invariant}
Let $G=G(\mathbf 1,B)$ be an edge-weighted graph.
Then the graph invariant $h(\,\cdot\,, G)$ is a cycle matroid invariant if and only if $G$ has a generously transitive automorphism group. 
\end{theorem}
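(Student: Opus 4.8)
The plan is to make the whole equivalence hinge on one ``pivot'' condition. For a finite graph $H$ carrying two distinguished vertices $u,v$ (allowed to coincide, and $H$ allowed to be disconnected), let $M_H\in\fd^{n\times n}$ have entries $(M_H)_{ij}=\sum_{\phi\colon V(H)\to[n],\ \phi(u)=i,\ \phi(v)=j}\ \prod_{xy\in E(H)}B_{\phi(x)\phi(y)}$, so that gluing such gadgets along roots realizes products: gluing $v$ of one to $u$ of another gives matrix multiplication, gluing $u$ to $u$ and $v$ to $v$ gives the Hadamard product, and $M_H^{\mathsf T}=M_{H'}$ where $H'$ swaps the roots. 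Thus $\mathcal A:=\operatorname{span}\{M_H\}$ is an algebra under both products, closed under transposition, containing $I$ (single vertex with $u=v$), $J$ (two isolated roots) and $B$ (one edge $uv$). I will prove: $h(\,\cdot\,,G)$ is a cycle matroid invariant $\iff$ every $M_H$ is symmetric $\iff$ $\operatorname{Aut}(G)$ is generously transitive. (That $G$ must at least be regular already drops out: for $H$ an edge with a pendant at $v$ one has $M_H=BD$ with $D$ the degree-diagonal, and $BD$ symmetric forces $G$ regular — an equality case of Cauchy–Schwarz seen through the cleaving that turns a path of length $2$ into two disjoint edges.)

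The crux is the identification of $\mathcal A$ with the \emph{orbital algebra} of $\operatorname{Aut}(G)$, the span of the $0/1$ indicator matrices $A_\Delta$ of the orbits $\Delta$ of $\operatorname{Aut}(G)$ on $[n]\times[n]$. One inclusion is immediate since each $M_H$ is $\operatorname{Aut}(G)$-invariant in $(i,j)$. For the reverse — a ``two-pinned, edge-weighted'' analogue of Lov\'asz's theorem that homomorphism counts determine a graph up to isomorphism — I will pass from homomorphism to injective-homomorphism matrices by M\"obius inversion over quotients of the source graph (which keeps us in $\mathcal A$), and take as source graph the \emph{support graph} of $G$ on $[n]$ with an indeterminate multiplicity $m_{ab}$ on each of its edges and two vertices pinned at a pair $(p,q)$. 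An injective map into $[n]$ is then a bijection, and to contribute it must be an automorphism of the support graph, contributing the monomial $\prod_{ab}B_{\phi(a)\phi(b)}^{\,m_{ab}}$ in the multiplicities; the weight-preserving ones — the elements of $\operatorname{Aut}(G)$ — are exactly those whose monomial is $\prod_{ab}B_{ab}^{\,m_{ab}}$. Separating this monomial out by linear independence of distinct multiplicative characters on $\mathbb N^{E}$ (a Vandermonde-type argument — here is where the multilinear-algebraic bookkeeping enters) shows a nonzero integer multiple of $A_\Delta$, for $\Delta$ the orbit of $(p,q)$, lies in $\mathcal A$; varying $(p,q)$ puts every $A_\Delta$ in $\mathcal A$. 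Granting this, every $M_H$ is symmetric iff every $A_\Delta$ is symmetric (each set spans the other), iff every orbit equals its transpose orbit, which is exactly generous transitivity of $\operatorname{Aut}(G)$.

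It remains to tie ``cycle matroid invariant'' to ``every $M_H$ symmetric'', via Whitney's $2$-isomorphism theorem: two graphs have the same cycle matroid exactly when one comes from the other by a sequence of vertex identifications/cleavings and twists about $2$-separations. If every $M_H$ is symmetric then $h$ is multiplicative on disjoint unions (automatic), multiplicative on $1$-sums (the $1$-rooted profile $\rho=(\hom(F_0,G;r\mapsto i))_i$ occurs as the symmetric gadget matrix $\rho\mathbf 1^{\mathsf T}$, forcing $\rho$ to be a constant multiple of $\mathbf 1$, which together with all vertex weights being $1$ gives the correct normalization), invariant under twists (since $\sum_{ij}(M_{F_1})_{ij}(M_{F_2})_{ij}=\sum_{ij}(M_{F_1})_{ij}(M_{F_2})_{ji}$ when $M_{F_2}$ is symmetric), and unchanged by adjoining isolated vertices; by Whitney $h(\,\cdot\,,G)$ is a cycle matroid invariant. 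Conversely, equating $h$ on a graph and its twist gives $\sum_{ij}(M_{F_1})_{ij}\big((M_{F_2})_{ij}-(M_{F_2})_{ji}\big)=0$ for a supply of two-rooted $F_1,F_2$ that, together with the degenerate gadgets above, span $\mathcal A$; expanding $M_{F_2}=\sum_\Delta c_\Delta A_\Delta$ and pairing against each $A_\Gamma$ gives $(c_\Gamma-c_{\Gamma^{*}})|\Gamma|=0$, whence $M_{F_2}$ is symmetric — the integrality of the $A_\Gamma$ sidestepping any issue with the underlying field. This closes the chain of equivalences, recovering and extending to edge weights the sufficiency of de la Harpe and Jaeger.

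The step I expect to be the main obstacle is the identification $\mathcal A=$ orbital algebra in the edge-weighted setting: for ordinary graphs one reads $\operatorname{Aut}(G)$ straight off injective-homomorphism counts, but with real (even negative) weights one cannot ``see'' automorphisms, and the device of varying edge multiplicities and separating the resulting monomials must be set up carefully — in particular to absorb zero weights through the support graph and to accommodate disconnected $G$. A lesser technical point is checking that the twists and $1$-sums licensed by Whitney's theorem really reach enough of the $M_H$ to force symmetry of all of $\mathcal A$, which needs the degenerate gadgets (merged roots, and gadgets with the two roots in different components) kept in play.
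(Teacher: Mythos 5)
Your proposal is essentially correct, but it reaches the key algebraic fact by a genuinely different route from the paper. The paper works with the Freedman--Lov\'asz--Schrijver graph algebras and proves (Theorem~\ref{thm:tensor}) that for \emph{twin-free} $G(a,B)$ the image of $\fd\G_k$ under $p_{a,B}$ is exactly the space of $\Gamma(a,B)$-invariant tensors, via the idempotent/interpolation argument resting on Lov\'asz's Lemma~2.4; twins are then handled separately by the twin-reduction Lemma~\ref{lem:twin groups}, and the $1$-sum step by a dimension count (Lemma~\ref{lem:trans}). You instead prove the $k=2$ statement in the form ``the span $\mathcal A$ of two-rooted gadget matrices equals the orbital algebra of $\operatorname{Aut}(G)$'' directly, by M\"obius-inverting to injective maps, decorating the support graph with edge multiplicities, and separating the resulting multiplicative characters; and because you allow the two roots to coincide, this identification holds verbatim even when $G$ has twins (with all vertex weights $1$ swapping twins is an automorphism, and the merged-root, i.e.\ diagonal, gadgets supply precisely the diagonal orbitals that honest $2$-labelled graphs cannot produce), so you bypass the twin-reduction lemma; you also replace Lemma~\ref{lem:trans} by the neat observation that symmetry of the rank-one gadget $\rho\mathbf 1^{\mathsf T}$ forces constant one-rooted profiles. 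The Whitney $2$-isomorphism scaffolding (twists, $1$-sums, isolated vertices) is the same in both arguments. What your route buys is a self-contained construction of the orbit indicators inside the gadget span, with no appeal to Lov\'asz's lemma and no case split on twins; what the paper's route buys is a statement valid for all $k$ and arbitrary vertex weights $a$, together with the kernel characterization, at the price of the twin-free hypothesis and the extra reduction step.

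One step does need a small repair as written. In the necessity direction you pair $Y:=M_{F_2}-M_{F_2}^{\mathsf T}$ against the orbit indicators $A_\Delta$, justified by saying that the two-rooted $F_1$'s ``together with the degenerate gadgets'' span $\mathcal A$. But merged-root gadgets cannot occur as $F_1$ in a Whitney twist, so cycle matroid invariance only yields orthogonality of $Y$ to the span of honest two-rooted gadget matrices, which in the presence of twins is strictly smaller than the orbital algebra (e.g.\ for $B$ the all-ones matrix it is spanned by $J$ alone). The fix is immediate and should be stated explicitly: your injectivization exhibits a positive multiple of each $A_\Delta$ as an honest two-rooted combination plus a diagonal (merged-root) part, and diagonal --- indeed all symmetric --- matrices pair to zero with the antisymmetric $Y$; hence $\langle A_\Delta,Y\rangle=0$ still follows and your coefficient computation $(c_\Delta-c_{\Delta^{*}})|\Delta|=0$ closes the argument. (A minor slip in an aside: symmetry of $BD$ forces the weighted degree to be constant only on each component of the support graph, not regularity of $G$; this is not used, so it is harmless.)
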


 
To prove Theorem~\ref{thm:matroid invariant} we shall use graph algebras, as introduced by Freedman, Lov\'asz and Schrijver~\cite{FLS07}.

\section{Graph algebras}
Let $\G_k$ denote the set of $k$-labelled graphs, i.e., elements of $\G_k$ are graphs in which $k$ distinct vertices are labelled with the integers $1,\ldots,k$.
For a given $k$-labelled graph it will be convenient to identify the set $[k]:=\{1,\ldots,k\}$ with the subset of $k$ vertices that receive a label.
Any graph invariant can be extended to $k$-labelled graphs  by forgetting the labels. 

We make $\G_k$ into a semigroup by defining for $G_1,G_2\in \G_k$ the product $G_1\cdot G_2$ to be the $k$-labelled graph obtained from the disjoint union of $G_1$ and $G_2$ by identifying vertices that have the same label. (This operation applied to simple graphs may produce multiple edges; we recall that graphs for us are allowed to have multiple edges and loops.)
Given a field $\fd$ of characteristic zero,  let $\fd \G_k$ be the semigroup algebra of $(\G_k,\cdot)$, whose elements are finite formal $\fd$-linear combinations of $k$-labelled graphs. See for example~\cite{LS07} for more on the semigroup algebra $\fd\G_k$.

For a graph parameter $f:\G_0\to \fd$ and $k\in \N$ we define 
\[
\mathcal I(f)_k:=\{\gamma\in \fd \G_k\mid f(\gamma\cdot F)=0 \text{ for all } F\in \G_k\},
\]
where the labels of $\gamma\cdot F$ are forgotten in evaluating $f(\gamma\cdot F)$. The set $\mathcal I(f)_k$ is an ideal in $\fd\G_k$. 

Now we fix a weighted graph $G(a,B)$ on $n$ vertices, where  $a\in (\fd^*)^{n}$ and $B$ is a symmetric matrix in $\fd^{n\times n}$. 
Let $V:=\fd^n$ and let $e_1,\ldots, e_n$ be the standard basis for $V$. 
For $k\in \N$, let $p_{a,B}:\fd\G_k\to V^{\otimes k}$ be the linear map defined for $F\in \G_k$ by
\[
F\mapsto \sum_{\phi:V(F)\to [n]}\left(\prod_{v\in V(F)\setminus [k]}a_{\phi(v)}\cdot \prod_{uv\in E(F)}B_{\phi(u),\phi(v)}\right) e_{\phi(1)}\otimes \cdots \otimes e_{\phi(k)}.
\]
For $k=0$ the map $p_{a,B}$ is the graph parameter $\hom(\,\cdot\,,G(a,B))$.
If $\sum_{i=1}^na_i\neq 0$, the graph parameter $h(\,\cdot\,,G)$ can be generalized to weighted graphs by
\[
h(F,G(a,B)):=\frac{\hom(F,G(a,B))}{\left[\sum_{i=1}^{n}a_i\right]^{c(F)}}.
\]

The automorphism group $\Gamma(a,B)$ of $G(a,B)$ is the subgroup of permutations of the vertex set of $G(a,B)$ that preserve all vertex and edge weights of $G(a,B)$.
The group $\Gamma(a,B)$ acts naturally on $V^{\otimes k}$ for any $k\in \N$. 
One way to realize this action is to consider the basis vectors $e_i$ as the vertices of the graph $G(a,B)$, for which $\gamma\in \Gamma(a,B)$ sends $e_{i_1}\otimes \cdots \otimes e_{i_k}$ to $\gamma(e_{i_1})\otimes \cdots\otimes \gamma(e_{i_k})$. 
Here we use the fact that the tensors $e_{i_1}\otimes \cdots\otimes e_{i_k}$ form a basis for $V^{\otimes k}$.

A weighted graph $G(a,B)$ is \emph{twin-free} if no two rows of $B$ are equal.
To prove our main result, we will need a characterization of the image of $p_{a,B}$ for twin-free graphs in terms of the tensors invariant under $\Gamma(a,B)$, building on and generalizing a result of Lov\'asz~\cite[Theorem 2.2]{L06} (see also~\cite{L12}) and a result of the second author~\cite[Theorem 6.15]{R13}.
\begin{theorem}\label{thm:tensor}
Suppose that $G(a,B)$ is twin-free. 
Then $(\ker p_{a,B})\cap \fd\G_k=\mathcal I(p_{a,B})_k$ and $p_{a,B}(\fd\G_k)=(V^{\otimes k})^{\Gamma(a,B)}$, the space of tensors that are invariant under the action of $\Gamma(a,B)$.
\end{theorem}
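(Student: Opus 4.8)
\noindent The plan is to deduce both assertions from one substantial input — a rank formula for the connection matrices of $\hom(\,\cdot\,,G(a,B))$, a suitable generalisation of \cite[Theorem 2.2]{L06} and \cite[Theorem 6.15]{R13} — after which the theorem follows by a short dimension count. Write $f:=\hom(\,\cdot\,,G(a,B))$ (the $k=0$ case of $p_{a,B}$) and equip $V^{\otimes k}$ with the symmetric bilinear form $\langle\,\cdot\,,\cdot\,\rangle$ whose Gram matrix in the standard basis is $D_a^{\otimes k}$, where $D_a:=\mathrm{diag}(a_1,\dots,a_n)$; since $a\in(\fd^*)^n$ this form is non-degenerate. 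Directly from the definitions of $p_{a,B}$ and of the gluing product on $\G_k$ one checks
\[
\hom(F_1\cdot F_2,G(a,B))=\langle p_{a,B}(F_1),p_{a,B}(F_2)\rangle\qquad(F_1,F_2\in\G_k),
\]
the point being that in $F_1\cdot F_2$ the $k$ identified vertices carry precisely the vertex weights that $p_{a,B}$ omits; extending bilinearly, $f(\gamma_1\cdot\gamma_2)=\langle p_{a,B}(\gamma_1),p_{a,B}(\gamma_2)\rangle$ for $\gamma_1,\gamma_2\in\fd\G_k$. Put $W:=p_{a,B}(\fd\G_k)$ and write $\perp$ for orthogonality in $V^{\otimes k}$ under this form. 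Then $\mathcal I(p_{a,B})_k=\{\gamma\in\fd\G_k:p_{a,B}(\gamma)\in W^\perp\}=p_{a,B}^{-1}(W\cap W^\perp)$ (using $p_{a,B}(\gamma)\in W$); in particular $\ker p_{a,B}\cap\fd\G_k=p_{a,B}^{-1}(0)\subseteq\mathcal I(p_{a,B})_k$, with equality exactly when $W\cap W^\perp=0$. Finally $W\subseteq(V^{\otimes k})^{\Gamma(a,B)}$, since for $F\in\G_k$ and $\gamma\in\Gamma(a,B)$ the substitution $\phi\mapsto\gamma\circ\phi$ permutes the maps $V(F)\to[n]$, fixes every factor $a_{\phi(v)}$ and $B_{\phi(u),\phi(v)}$, and sends $e_{\phi(1)}\otimes\cdots\otimes e_{\phi(k)}$ to its $\gamma$-image.

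Let $N(f,k)$ denote the $k$-th connection matrix, with rows and columns indexed by $\G_k$ and $(F_1,F_2)$-entry $f(F_1\cdot F_2)$. By the identity above $N(f,k)=M^T D_a^{\otimes k}M$, where $M$ has columns $p_{a,B}(F)$, $F\in\G_k$; since $D_a^{\otimes k}$ is invertible, a routine argument with this factorisation gives $\mathrm{rk}\,N(f,k)=\dim W-\dim(W\cap W^\perp)$. The input I would invoke is the extension to our setting — $\fd$ of characteristic zero, $B$ symmetric over $\fd$, vertex weights in $\fd^*$ not assumed positive — of Lov\'asz's theorem and its edge-weighted refinement: for twin-free $G(a,B)$,
\[
\mathrm{rk}\,N(f,k)=\dim(V^{\otimes k})^{\Gamma(a,B)},
\]
the right-hand side being the number of $\Gamma(a,B)$-orbits on $[n]^k$. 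Granting this and using $W\subseteq(V^{\otimes k})^{\Gamma(a,B)}$,
\[
\dim(V^{\otimes k})^{\Gamma(a,B)}=\mathrm{rk}\,N(f,k)=\dim W-\dim(W\cap W^\perp)\le\dim W\le\dim(V^{\otimes k})^{\Gamma(a,B)},
\]
so every inequality is an equality: $\dim W=\dim(V^{\otimes k})^{\Gamma(a,B)}$ forces $W=(V^{\otimes k})^{\Gamma(a,B)}$, and $\dim(W\cap W^\perp)=0$ gives $\ker p_{a,B}\cap\fd\G_k=\mathcal I(p_{a,B})_k$ by the first paragraph. (That $W\cap W^\perp=0$ can also be seen directly once $W=(V^{\otimes k})^{\Gamma(a,B)}$ is known: the idempotent $\tfrac1{|\Gamma(a,B)|}\sum_{\gamma\in\Gamma(a,B)}\gamma$ projects onto $(V^{\otimes k})^{\Gamma(a,B)}$ and is self-adjoint for $\langle\,\cdot\,,\cdot\,\rangle$ because $\Gamma(a,B)$ preserves $D_a$, so this non-degenerate form stays non-degenerate on the invariant subspace.)

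The main obstacle is the lower bound $\mathrm{rk}\,N(f,k)\ge\dim(V^{\otimes k})^{\Gamma(a,B)}$ for a twin-free weighted graph over $\fd$. I would obtain it by exhibiting, for each $\Gamma(a,B)$-orbit $\mathcal O\subseteq[n]^k$, $k$-labelled graphs whose $p_{a,B}$-images pair non-singularly under $\langle\,\cdot\,,\cdot\,\rangle$; at its heart lies a separation statement — for twin-free $G(a,B)$, $k$-tuples of vertices lying in distinct $\Gamma(a,B)$-orbits are distinguished by rooted homomorphism counts into $G(a,B)$ — proved by an iterated-neighbourhood (Weisfeiler--Leman-type) refinement that twin-freeness gets off the ground. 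The points needing care beyond Lov\'asz's positive-weight argument are that the refinement must be carried out over $\fd$ rather than over $\mathbb R_{>0}$ (the relevant vector algebras are split \'etale over a field of characteristic zero, so the Vandermonde and idempotent steps still go through), and that wherever positive-definiteness is used one substitutes the non-degenerate form $D_a^{\otimes k}$ together with the averaging idempotent above. Once the rank formula is in hand, the rest is the bookkeeping of the previous paragraph.
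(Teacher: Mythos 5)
Your framework is sound as far as it goes: the weighted form with Gram matrix $D_a^{\otimes k}$ does satisfy $\hom(F_1\cdot F_2,G(a,B))=\langle p_{a,B}(F_1),p_{a,B}(F_2)\rangle$ (indeed this is the exactly correct form, since the labelled vertices are the ones whose weights $p_{a,B}$ omits), the identification $\mathcal I(p_{a,B})_k=p_{a,B}^{-1}(W\cap W^\perp)$ is right, the factorisation $N(f,k)=M^TD_a^{\otimes k}M$ gives $\mathrm{rk}\,N(f,k)=\dim W-\dim(W\cap W^\perp)$, and the dimension count correctly derives both assertions of the theorem from the rank formula $\mathrm{rk}\,N(f,k)=\dim(V^{\otimes k})^{\Gamma(a,B)}$. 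The averaging-idempotent remark about nondegeneracy on the invariant subspace is also essentially the paper's argument for the ideal statement.

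The gap is that the ``input'' you invoke is not an available citation: the rank formula for twin-free $G(a,B)$ over a characteristic-zero field with arbitrary nonzero (not positive real) vertex weights and arbitrary symmetric $B$ is precisely the generalization of \cite[Theorem 2.2]{L06} and \cite[Theorem 6.15]{R13} that this theorem is asserting; by your own reduction it is equivalent to the two statements to be proved, so invoking it is circular unless you prove it. You do acknowledge this and sketch a proof of the hard lower bound via a separation lemma (tuples in distinct $\Gamma(a,B)$-orbits are separated by rooted homomorphism counts) ``proved by an iterated-neighbourhood (Weisfeiler--Leman-type) refinement''. That mechanism does not work: stable WL-type partitions are in general strictly coarser than automorphism-orbit partitions, and twin-freeness does not repair this. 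For example, a strongly regular graph with trivial automorphism group (such as a Paulus graph on $25$ vertices) is twin-free, its orbit partition is discrete, yet every iterated neighbourhood refinement of bounded order stabilises at a partition lumping all vertices together; so no refinement argument can certify that equal refined data implies equal orbit. The separation lemma is true, but its proof is genuinely algebraic: the paper proves the image statement by showing, inside the Hadamard-type algebra $(V^{\otimes k},*)$, that the indicator tensors of the equivalence classes of ``equal rooted homomorphism counts'' lie in the image (via an element with the maximal number of distinct coefficients plus interpolating polynomials), and then identifies these classes with the $\Gamma(a,B)$-orbits by adapting Lov\'asz's Lemma 2.4 of \cite{L06} (whose proof is a Möbius/inclusion--exclusion style argument, not a refinement, and is checked to survive the passage from positive real weights to $\fd^*$). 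That step --- producing the orbit idempotents in the image and separating orbits without positivity --- is the substantive content your proposal leaves unestablished; once it is supplied, your connection-matrix packaging and the paper's direct argument for $\mathcal I(p_{a,B})_k=(\ker p_{a,B})\cap\fd\G_k$ coincide in substance.
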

\begin{proof}
We start with the second statement.
Let us denote the image of $\mathbb{F}\G_k$ under $p_{a,B}$ by $\mathcal A_k''$ and the space of $\Gamma$-invariant tensors in $V^{\otimes k}$ by $\mathcal A_k'$, where $\Gamma:=\Gamma(a,B)$.

It is easy to see that $\mathcal A_k''\subseteq \mathcal A_k'$.
To show the reverse inclusion we use an idea from \cite{R13} that allows us to reduce the argument to that given in the proof of Lov\'asz's result~\cite{L06}.
Fix $k\in \N$. For $\phi:[k]\to [n]$ we define $e_{\phi}:=e_{\phi(1)}\otimes \cdots \otimes e_{\phi(k)}$ and 
for $\phi,\psi:[k]\to [n]$ we define $e_{\phi}* e_{\psi}:=e_{\phi}\delta_{\phi,\psi}$ and extending this bilinearly to $V^{\otimes k}$. (Here $\delta$ is the Kronecker delta.) This makes $V^{\otimes k}$ into an algebra. 
Both $\mathcal A'_k$ and $\mathcal A''_k$ are subalgebras of $V^{\otimes k}$. 

Put an equivalence relation $\sim$ on $\{\phi:[k]\to [n]\}$ by declaring that $\phi\sim \psi$ if and only if for each $x=\sum_{\kappa:[k]\to [n]}x_\kappa e_\kappa\in \mathcal A_k''$ we have $x_{\phi}=x_{\psi}$.
Let $E_1,\ldots E_t$ be the equivalence classes of this relation. 
Note that $\sum_{\phi:[k]\to [n]}e_{\phi}\in \mathcal A_k''$, as it is the image of the disjoint union of $k$ labelled vertices.
Then
\begin{equation}
\Phi_i:=\sum_{\phi\in E_i}e_{\phi}\in \mathcal A_k'' \text{ for } i=1,\ldots,t.	\label{eq:eq classes}
\end{equation}
To see \eqref{eq:eq classes} let $z=\sum_{i=1}^t z_i \Phi_i\in \mathcal A''_k$ be such that the number of distinct coefficients $z_i$ is maximal. 
Then all the $z_i$ are distinct. For if this were not true then without loss of generality $z_1=z_2$. 
By definition of the equivalence relation~$\sim$, there exists $y=\sum_{i=1}^ty_i\Phi_i$ such that $y_1\neq y_2$.
Now pick a nonzero $\lambda$ with the property that $\lambda z_i+y_i\neq \lambda z_j+y_j$ for all $i,j$ such that $z_i\neq z_j$.
Then $\lambda z+y$ contains more distinct coefficients than $z$, which is a contradiction. 
Now choose interpolating polynomials $q_i$ such that $q_i(z_j)=\delta_{i,j}$, cf. \cite[Lemma 2.9]{CLS04}.
Then since $\mathcal A''_k$ is an algebra we have $q_i(z)=\Phi_i\in \mathcal A''_k$, proving \eqref{eq:eq classes}.

By Lemma 2.4 from~\cite{L06}\footnote{Even though Lov\'asz \cite{L06} works over $\mathbb R$ and assumes $a_i>0$ for all $i$, it is easy to check that the proof of his Lemma 2.4 remains valid in our setting.} we know that the $E_i$ are precisely the orbits of $\Gamma$ acting on $\{\phi:[k] \to [n]\}$.
From this we conclude that $\mathcal A'_k=\mathcal A''_k$, which establishes the second statement of the theorem.

As for the first statement, we clearly have that $(\ker p_{a,B})\cap \fd\G_k \subseteq \mathcal I(p_{a,B})_k$.
To see the reverse inclusion, consider the unique bilinear form on $V^{\otimes k}$ defined for $\phi,\psi:[k]\to [n]$ by  $(e_{\phi},e_{\psi}):=\delta_{\phi,\psi}$.
Then $(p_{a,B}(F_1),p_{a,B}(F_2))=\hom(F_1\cdot F_2,G)$ for $F_1,F_2\in \G_k$.
Now let $x\in \mathcal I(p_{a,B})_k$ and suppose that $x\notin  (\ker p_{a,B})\cap \fd\G_k$.
Then, since the form $(\cdot,\cdot)$ is nondegenerate on $V^{\otimes k}$, there exists $v\in V^{\otimes k}$ such that $(p_{a,B}(x),v)=1$. 
As $p_{a,B}(x)\in \mathcal A''_k=\mathcal A'_k$, we know that $p_{a,B}(x)$ is invariant under $\Gamma$.
So we have
\begin{equation}\label{eq:form=1}
1=(p_{a,B}(x),v)=\frac{1}{|\Gamma|} \sum_{\gamma \in \Gamma}(\gamma p_{a,B}(x),v)=(p_{a,B}(x),\frac{1}{|\Gamma|} \sum_{\gamma \in \Gamma}\gamma v).
\end{equation}
Now $\frac{1}{|\Gamma|} \sum_{\gamma \in \Gamma}\gamma v\in \mathcal A'_k=\mathcal A''_k$. Thus there exists $y\in \fd\G_k$ such that $v=p_{a,B}(y)$.
Writing $y=\sum_{i=1}^l y_i F_i$ for certain $F_i\in \G_k$ and $y_i\in \fd$, we now have that $(p_{a,B}(x),p_{a,B}(y))=\sum_{i=1}^l y_i \hom(x\cdot F_i,G)=0$, contradicting \eqref{eq:form=1}.
This finishes the proof. 
\end{proof}

We now collect some important consequences of this result.
\begin{lemma}\label{lem:trans}
Suppose that $G:=G(a,B)$ is twin-free and $\sum_{i=1}^n a_i\neq 0$.
Then $h(F_1,G)h(F_2,G)=h(F_1\cdot F_2,G)$ for all $F_1,F_2\in \G_1$ if and only if $\Gamma:=\Gamma(a,B)$ acts transitively on $G$.
\end{lemma}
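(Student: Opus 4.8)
The plan is to translate the multiplicativity of $h(\,\cdot\,,G)$ on $\G_1$ into an algebraic identity on the space of $\Gamma$-invariant vectors of $V=\fd^n$, and then to read transitivity of $\Gamma$ off that identity. Write $s:=\sum_{i=1}^{n}a_i\neq0$, and for $F\in\G_1$ write $p_{a,B}(F)=\sum_i p_i(F)e_i$; by the definition of $p_{a,B}$ the coordinate $p_i(F)$ is the weighted count of homomorphisms $F\to G$ sending the labelled vertex to $i$, with the weight $a_i$ of that vertex omitted. Reading off~\eqref{eq:def_hom} then gives $\hom(F,G)=\sum_i a_ip_i(F)$ for $F\in\G_1$ (now weighting the labelled vertex) and, for $F_1,F_2\in\G_1$, $\hom(F_1\cdot F_2,G)=\sum_i a_ip_i(F_1)p_i(F_2)$, since a homomorphism of $F_1\cdot F_2$ is a pair of homomorphisms of $F_1$ and $F_2$ agreeing on the label, which is weighted once. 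Moreover $c(F_1\cdot F_2)=c(F_1)+c(F_2)-1$, because exactly the two components carrying the label get merged. Dividing by $s^{c(F_1)+c(F_2)}$ (legitimate since $s\neq0$), the identity $h(F_1,G)h(F_2,G)=h(F_1\cdot F_2,G)$ for all $F_1,F_2\in\G_1$ becomes equivalent to
\[
\Bigl(\textstyle\sum_i a_ip_i(F_1)\Bigr)\Bigl(\textstyle\sum_i a_ip_i(F_2)\Bigr)=s\,\textstyle\sum_i a_ip_i(F_1)p_i(F_2)\qquad\text{for all }F_1,F_2\in\G_1.
\]

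Introduce the linear functional $L(x):=\sum_i a_ix_i$ on $V$ (so $L(\mathbf 1)=s$) and the coordinatewise product $x*y$ on $V=\fd^n$, which is the $k=1$ case of the product $*$ used in the proof of Theorem~\ref{thm:tensor}. The displayed condition says $L(x)L(y)=s\,L(x*y)$ for $x=p_{a,B}(F_1)$, $y=p_{a,B}(F_2)$. Both sides are bilinear in $(x,y)$, so $(x,y)\mapsto L(x)L(y)-s\,L(x*y)$ vanishes on all pairs of vectors from $\{p_{a,B}(F):F\in\G_1\}$ if and only if it vanishes on $W\times W$, where $W$ is the span of that set. Since $G(a,B)$ is twin-free, Theorem~\ref{thm:tensor} (with $k=1$) gives $W=(V)^{\Gamma}$. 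Hence $h(\,\cdot\,,G)$ is multiplicative on $\G_1$ if and only if
\[
L(x)L(y)=s\,L(x*y)\qquad\text{for all }x,y\in(V)^{\Gamma}.
\]

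The $\Gamma$-invariant vectors of $V=\fd^n$ are exactly those constant on the orbits of $\Gamma$ on $[n]$, so $(V)^{\Gamma}$ has a basis of orbit indicators $u_j=\sum_{i\in O_j}e_i$, one per orbit $O_j$. If $\Gamma$ is transitive there is a single orbit, $(V)^{\Gamma}=\fd\cdot\mathbf 1$, and for $x=\alpha\mathbf 1$, $y=\beta\mathbf 1$ we get $L(x)L(y)=\alpha\beta s^2=s\,L(\alpha\beta\mathbf 1)=s\,L(x*y)$, so the identity holds. Conversely, if $\Gamma$ is not transitive, pick two distinct orbits $O_1,O_2$; since $\Gamma$ preserves vertex weights, $a$ is constant and nonzero on each orbit, so $L(u_j)=|O_j|\,a_{i_j}\neq0$ because $\operatorname{char}\fd=0$; but $u_1*u_2=0$, whence $L(u_1)L(u_2)=s\,L(0)=0$, contradicting $L(u_1),L(u_2)\neq0$. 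So the identity fails, and $h(\,\cdot\,,G)$ is not multiplicative on $\G_1$. This gives the equivalence.

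I expect the only real obstacle to be the reduction in the first two paragraphs. Two things need care: bookkeeping the powers of $s$ (this is where $c(F_1\cdot F_2)=c(F_1)+c(F_2)-1$ enters), and, above all, using Theorem~\ref{thm:tensor} to replace ``for all $F\in\G_1$'' by ``for all $x\in(V)^{\Gamma}$''. What makes this legitimate is that, although $h(F_1,G)h(F_2,G)=h(F_1\cdot F_2,G)$ looks quadratic, the equivalent identity is \emph{bilinear} in $p_{a,B}(F_1)$ and $p_{a,B}(F_2)$, hence is detected on a spanning set; twin-freeness of $G(a,B)$ is used only to make that spanning set equal to $(V)^{\Gamma}$. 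Once the reduction is done, both implications are one-line verifications.
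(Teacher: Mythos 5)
Your proof is correct, but it takes a noticeably different route from the paper's, and it is worth recording how. You fold both implications into one bilinear identity: writing $s=\sum_i a_i$ and $L(x)=\sum_i a_ix_i$, multiplicativity of $h(\,\cdot\,,G)$ on $\G_1$ is equivalent to $L(x)L(y)=s\,L(x*y)$ on the span of $\{p_{a,B}(F):F\in\G_1\}$, which by the image statement of Theorem~\ref{thm:tensor} is $V^{\Gamma}$; you then test the identity on the orbit-indicator basis of $V^{\Gamma}$, where char $0$, $a_i\neq 0$ and constancy of $a$ on orbits give $L(u_j)\neq0$ while $u_1*u_2=0$ for distinct orbits. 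The paper instead proves the forward direction by showing $\dim(\fd\G_1/\mathcal I(p_{a,B})_1)=1$ (decomposing any $F_1$ as an ideal element plus a multiple of the single-vertex graph $\bullet$) and then uses \emph{both} statements of Theorem~\ref{thm:tensor} — kernel equals ideal and image equals invariants — to conclude $\dim V^{\Gamma}=1$; and it proves the converse by a direct counting argument (each homomorphism of $F_1\cdot F_2$ gives rise to $n$ homomorphisms of the disjoint union under transitivity), which the paper points out needs no twin-freeness. Your version buys uniformity and makes visible exactly where $s\neq0$, $a\in(\fd^*)^n$ and characteristic zero are used; the paper's version buys a more elementary converse and isolates twin-freeness as relevant only to the ``only if'' direction. (Your converse as written invokes $W=V^{\Gamma}$, hence twin-freeness, but since the image of $p_{a,B}$ is always contained in $V^{\Gamma}$, verifying the identity on $V^{\Gamma}$ would suffice without it, so nothing is lost.) One small presentational point: the reduction from ``for all $F_1,F_2\in\G_1$'' to ``for all $x,y\in V^{\Gamma}$'' rests, as you say, on bilinearity of $(x,y)\mapsto L(x)L(y)-s\,L(x*y)$; it is good that you flag this explicitly, since the original condition looks quadratic.
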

\begin{proof}
We first prove that if $h(\,\cdot\,,G)$ is multiplicative over the the product of elements of $\G_1$ then the automorphism group of $G$ must act transitively. 
Let $F_1,F_2\in \G_1$ and let $F=F_1\cdot F_2$. Let $\bullet \in \G_1$ denote the graph with one vertex and no edges.
Note first that $c(F)=c(F_1)+c(F_2)-1$. 
By assumption $h(F,G)=h(F_1,G)h(F_2,G)$, so 
$\left(\sum_{i=1}^n a_i\right)\hom(F,G)=\hom(F_1,G)\hom(F_2,G)$ and if 
$\hom(F_1,G)=0$, then $F_1\in \mathcal{I}(p_{a,B})_1$ as $p_{a,B}$ acts as $\hom(\cdot,G)$ when the labels on the vertices are forgotten. 
We have $\hom(\bullet,G)\neq 0$ and $\bullet \cdot \bullet =\bullet$, so that $\bullet\notin \mathcal I(p_{a,B})_1$ and $\dim(\fd\G_1/\mathcal I(p_{a,B})_1)\geq 1$. Writing $F_1$ as 
$$F_1=\left[F_1-\frac{\hom(F_1,G)}{\hom(\bullet,G)} \bullet\right]+\frac{\hom(F_1,G)}{\hom(\bullet,G)}\bullet$$ (a sum of an element in $\mathcal{I}(p_{a,B})_1$ and a scalar multiple of $\bullet$), we conclude that 
$\dim(\fd\G_1/\mathcal I(p_{a,B})_1)=1$.
Hence by Theorem~\ref{thm:tensor} the space $V^\Gamma$ is one-dimensional, which implies that $\Gamma$ acts transitively on~$G$.

Conversely, suppose that $\Gamma$ acts transitively on $G$. 
For each homomorphism $F_1\cdot F_2\to G$ there are $n$ distinct homomorphisms 
 from the disjoint union of $F_1$ and $F_2$ to $G$; the image of the labelled vertex of $F_1$ is the same as that of $F_1\cdot F_2$, leaving  $n$ choices still for the labelled vertex of $F_2$ using the transitive action of $\Gamma$. (Note that we did not use the assumption that $G$ is twin-free here.)
\end{proof}
For $F\in \G_2$ define $F^T$ to be the $2$-labelled graph arising from $F$ by switching its labels and extend this operation linearly to $\fd \G_2$.

\begin{lemma}\label{lem:gen trans}
Suppose that $G:=G(a,B)$ is twin-free and $\sum_{i=1}^n a_i\neq 0$.
Then $h(F_1\cdot F_2,G)=h(F_1^T\cdot F_2,G)$ for all $F_1,F_2\in \G_2$ if and only if $\Gamma:=\Gamma(a,B)$ acts generously transitively on $G$.
\end{lemma}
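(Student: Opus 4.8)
The plan is to follow the template of Lemma~\ref{lem:trans}, with the multiplicativity condition replaced by the symmetry condition and a flip operator on $V^{\otimes 2}$ playing the role that transitivity played there. First I would record the combinatorial fact that $c(F_1\cdot F_2)=c(F_1^T\cdot F_2)$ for all $F_1,F_2\in\G_2$: the number of components of a gluing of $F_1$ and $F_2$ along their labelled vertices is determined by which labelled vertices of $F_1$, and which labelled vertices of $F_2$, sit in a common component, and a short case check shows that switching the two labels of $F_1$ does not alter it. Since $\sum_{i=1}^n a_i\neq 0$, it follows that the identity $h(F_1\cdot F_2,G)=h(F_1^T\cdot F_2,G)$ for all $F_1,F_2\in\G_2$ is equivalent to $\hom(F_1\cdot F_2,G)=\hom(F_1^T\cdot F_2,G)$ for all $F_1,F_2\in\G_2$.

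Next I would bring in the linear flip $\tau\colon V^{\otimes 2}\to V^{\otimes 2}$, $\tau(e_i\otimes e_j)=e_j\otimes e_i$, and check that (i) $p_{a,B}(F^T)=\tau\,p_{a,B}(F)$ for every $F\in\G_2$ --- immediate, since relabelling the two labelled vertices leaves the vertex- and edge-weight factors in the definition of $p_{a,B}$ unchanged and only transposes the two tensor legs --- and (ii) $\tau$ commutes with the action of $\Gamma:=\Gamma(a,B)$ on $V^{\otimes 2}$, as this action is diagonal. Using the identity $(p_{a,B}(F_1),p_{a,B}(F_2))=\hom(F_1\cdot F_2,G)$ from the proof of Theorem~\ref{thm:tensor} (for the form $(e_\phi,e_\psi)=\delta_{\phi,\psi}$), the condition above turns into
\[
(u-\tau u,\,v)=0\qquad\text{for all }u,v\in\mathcal A''_2:=p_{a,B}(\fd\G_2).
\]
By Theorem~\ref{thm:tensor}, $\mathcal A''_2=(V^{\otimes 2})^{\Gamma}$, and by (ii) this subspace is $\tau$-stable, so $u-\tau u\in\mathcal A''_2$ whenever $u\in\mathcal A''_2$.

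To conclude I would pass to the orbit-sum basis of $(V^{\otimes 2})^{\Gamma}$: writing $\Phi_O:=\sum_{(i,j)\in O}e_i\otimes e_j$ as $O$ ranges over the $\Gamma$-orbits on $[n]\times[n]$, one has $\tau\Phi_O=\Phi_{O^{\mathrm t}}$ with $O^{\mathrm t}:=\{(j,i):(i,j)\in O\}$ again a $\Gamma$-orbit, and $(\Phi_O,\Phi_{O'})=|O|\,\delta_{O,O'}$ with $|O|\neq 0$ in $\fd$ since $\operatorname{char}\fd=0$. Testing the displayed identity with $u=\Phi_O$ and $v=\Phi_{O^{\mathrm t}}$ yields $|O|(\delta_{O,O^{\mathrm t}}-1)=0$, forcing $O=O^{\mathrm t}$ for every orbit; conversely, if all $\Gamma$-orbits on $[n]^2$ are symmetric then $\tau$ fixes each $\Phi_O$, hence fixes $\mathcal A''_2$ pointwise, and the identity holds trivially. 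Since $O=O^{\mathrm t}$ for all $O$ means precisely that for all $i,j\in[n]$ there is $\gamma\in\Gamma$ with $\gamma(i)=j$ and $\gamma(j)=i$, this is exactly the statement that $\Gamma$ acts generously transitively on $G$. I expect the steps most prone to slips to be the component-count bookkeeping in the first paragraph and keeping track of the hypothesis $\operatorname{char}\fd=0$, which is what makes the Gram matrix of the $\Phi_O$ invertible --- equivalently, what lets one pass from $(u-\tau u,v)=0$ on $\mathcal A''_2$ to $u=\tau u$ on $\mathcal A''_2$; the remainder transcribes directly through Theorem~\ref{thm:tensor}.
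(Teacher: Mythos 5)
Your proof is correct and follows essentially the same route as the paper: both reduce, via the pairing $(p_{a,B}(F_1),p_{a,B}(F_2))=\hom(F_1\cdot F_2,G)$ and Theorem~\ref{thm:tensor}'s identification $p_{a,B}(\fd\G_2)=(V^{\otimes 2})^{\Gamma}$, to the statement that the transposition fixes the $\Gamma$-orbit sums $\sum_{(i,j)\in O}e_i\otimes e_j$, which is exactly generous transitivity. The only cosmetic difference is that where the paper invokes the first statement of Theorem~\ref{thm:tensor} (kernel equals ideal) to get $p_{a,B}(F_1)=p_{a,B}(F_1)^T$, you re-derive the needed nondegeneracy by the orthogonality of the orbit sums; your explicit check that $c(F_1\cdot F_2)=c(F_1^T\cdot F_2)$ is a detail the paper leaves implicit, and it is fine.
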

\begin{proof}

Define a \emph{transposition} on $V^{\otimes 2}$ by $(e_i\otimes e_j)^T:=e_j\otimes e_i$ and extending this linearly to $V^{\otimes2}$.

Let us begin with the assumption that $\hom(F_1\cdot F_2,G)=\hom(F_1^T\cdot F_2,G)$ for all $F_1,F_2\in \G_2$. 
So $F_1-F_1^T\in \mathcal I(p_{a,B})_2$ for each $F_1\in \G_2$.
By Theorem \ref{thm:tensor} this implies that $p_{a,B}(F_1)=p_{a,B}(F_1^T)=p_{a,B}(F_1)^T$ for each $F_1\in \fd\G_2$.
Also, by Theorem \ref{thm:tensor}, the image of $p_{a,B}$ is equal to $(V^{\otimes 2})^{\Gamma}$.

Now fix $i,j\in [n]$ and consider $v:=\sum_{\gamma\in \Gamma}\gamma(e_i)\otimes \gamma(e_j)$, the $\Gamma$-orbit of $e_i\otimes e_j$.
Then $v$ is clearly invariant under $\Gamma$ and hence there exists $F\in \fd\G_2$ such that $p_{a,B}(F)=v=v^T=p_{a,B}(F^T)$, which implies that $e_{j}\otimes e_{i}=\gamma(e_i)\otimes \gamma(e_j)$ for some $\gamma\in \Gamma$.
This exactly means that $\Gamma$ acts generously transitively on $G$.

Conversely, if $\Gamma$ acts generously transitively on $G$, this implies that $e_i\otimes e_j$ and $e_j\otimes e_i$ are in the same $\Gamma$-orbit for each $i,j\in [n]$.
From this it follows that $v^T=v$ for all $v\in (V^{\otimes 2})^\Gamma$ hence for each $F_1\in \G_2$ we have that $F_1-F_1^T\in \mathcal I(p_{a,B})_2$. 
In other words, $\hom(F_1\cdot F_2,G)=\hom(F_1^T\cdot F_2,G)$ for all $F_2\in \G_2$.
\end{proof}

\section{Proof of Theorem \ref{thm:matroid invariant}}
We need a result about automorphism groups of graphs with twins before giving our proof.

Recall that for a weighted graph $G(a,B)$ on $[n]$, vertices $i,j\in [n]$ are \emph{twins} if the $i$th row and $j$th row of $B$ are the same (regardless of the values $a_i$ and $a_j$).
This defines an equivalence relation on $[n]$. Let $C_1,\ldots,C_m$ be the equivalence classes. 
Define a weighted graph $G(a',B')$ on $[m]$ by letting $a'_i:=\sum_{j\in C_i}a_j$ for $i\in [m]$ and in which $B'$ is obtained from $B$ by removing all but one of the rows and columns indexed by $C_i$, for each $i\in [m]$. (In case $a_i'=0$ for some $i$ we just remove $i$ from $[m]$ and also the corresponding row and column from $B'$.)
We shall call $G(a',B')$ the \emph{twin-reduced graph}.
It is not difficult to see that $\hom(F,G(a,B))=\hom(F,G(a',B'))$ for all graphs $F$, cf.~\cite{L06}.
We denote by $\mathbf 1$ the vector in $\N^n$ with each entry equal to $1$ (for any $n$).
\begin{lemma} \label{lem:twin groups}
Let $G:=G({\bf 1},B)$ be a weighted graph with twin-reduced graph $G':=G(a',B')$.
Then $\Gamma:=\Gamma({\bf1}, B)$ acts (generously) transitively on $G({\bf 1},B)$ if and only if $\Gamma':=\Gamma(a',B')$ acts (generously) transitively on $G(a',B')$.
\end{lemma}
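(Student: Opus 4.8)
The plan is to compare $\Gamma$ and $\Gamma'$ directly, using the fact that the twin-classes form a system of blocks for the action of $\Gamma$. Write $C_1,\dots,C_m$ for the twin-classes of $G=G(\mathbf 1,B)$, fix a representative $c_p\in C_p$ (so $B'_{p,q}=B_{c_p,c_q}$ and $a'_p=|C_p|$, in particular $a'_p>0$, so no class is discarded in forming $G'$). I would first isolate the two elementary facts that drive everything. \textbf{(i)} Every $\gamma\in\Gamma$ maps twin-classes to twin-classes: if rows $k$ and $k'$ of $B$ coincide, then from $B_{\gamma(k),\gamma(q)}=B_{k,q}$ for all $q$ one gets $B_{\gamma(k),q'}=B_{k,\gamma^{-1}(q')}=B_{k',\gamma^{-1}(q')}=B_{\gamma(k'),q'}$ for all $q'$, so rows $\gamma(k),\gamma(k')$ coincide; applying this to $\gamma^{-1}$ gives the converse. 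Hence $\gamma$ induces a permutation $\bar\gamma$ of $[m]$ with $\gamma(C_p)=C_{\bar\gamma(p)}$, and I would check $\bar\gamma\in\Gamma'$: it preserves $a'$ since $|C_p|=|\gamma(C_p)|=|C_{\bar\gamma(p)}|$, and it preserves $B'$ since, using that $\gamma(c_p)$ is a twin of $c_{\bar\gamma(p)}$ (equal rows) and $\gamma(c_q)$ is a twin of $c_{\bar\gamma(q)}$ (equal rows, hence by symmetry of $B$ equal columns), one has $B'_{p,q}=B_{c_p,c_q}=B_{\gamma(c_p),\gamma(c_q)}=B_{c_{\bar\gamma(p)},\gamma(c_q)}=B_{c_{\bar\gamma(p)},c_{\bar\gamma(q)}}=B'_{\bar\gamma(p),\bar\gamma(q)}$. \textbf{(ii)} Conversely, given $\bar\gamma\in\Gamma'$, preservation of $a'$ gives $|C_p|=|C_{\bar\gamma(p)}|$, so one may pick an arbitrary bijection $C_p\to C_{\bar\gamma(p)}$ for each $p$; the resulting permutation $\gamma$ of $[n]$ lies in $\Gamma$, because for $k\in C_p$, $l\in C_q$ one has $B_{k,l}=B_{c_p,c_q}=B'_{p,q}$ (equal rows through $k\sim c_p$, equal columns through $l\sim c_q$ using symmetry) and likewise $B_{\gamma(k),\gamma(l)}=B'_{\bar\gamma(p),\bar\gamma(q)}=B'_{p,q}$. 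The crucial feature of (ii) is that the bijections on the individual classes are free: in particular one may force a prescribed element of $C_p$ to a prescribed element of $C_{\bar\gamma(p)}$, and for distinct classes $C_p,C_q$ with $\bar\gamma(p)=q$, $\bar\gamma(q)=p$ one may do this on $C_p$ and on $C_q$ independently since they are disjoint.

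With (i) and (ii) available, the equivalence is short. For the forward direction, if $\Gamma$ is transitive (resp. generously transitive) on $[n]$, then given $i,j\in[m]$ I would choose $k\in C_i$, $l\in C_j$ and $\gamma\in\Gamma$ with $\gamma(k)=l$ (resp.\ with $\gamma(k)=l$ and $\gamma(l)=k$); then $\gamma(C_i)=C_j$ (resp.\ also $\gamma(C_j)=C_i$), i.e.\ $\bar\gamma(i)=j$ (resp.\ also $\bar\gamma(j)=i$), so $\Gamma'$ is transitive (resp.\ generously transitive) on $[m]$. For the converse, suppose $\Gamma'$ is transitive (resp.\ generously transitive) on $[m]$ and let $k,l\in[n]$ with $k\in C_i$, $l\in C_j$. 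If $i=j$, the transposition interchanging $k$ and $l$ preserves $B$ (rows and, by symmetry, columns $k$ and $l$ of $B$ coincide) and hence lies in $\Gamma$, settling both cases. If $i\neq j$, pick $\bar\gamma\in\Gamma'$ with $\bar\gamma(i)=j$ (resp.\ with $\bar\gamma(i)=j$ and $\bar\gamma(j)=i$) and use (ii) to lift it to $\gamma\in\Gamma$ with $\gamma(k)=l$ (resp.\ with $\gamma(k)=l$ and $\gamma(l)=k$, choosing the bijections on $C_i$ and $C_j$ independently). Thus $\Gamma$ is transitive (resp.\ generously transitive) on $[n]$.

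The argument is not deep; the step I expect to be the most delicate is the bookkeeping in (i) and (ii), because ``twin'' is defined through equality of \emph{rows} while automorphisms and the identities $B_{k,l}=B'_{p,q}$ naturally involve both rows and columns — harmless because $B$ is symmetric, but it needs to be written out carefully. The only other point to watch is the degenerate case $i=j$ in the converse, where one must not try to lift through $\Gamma'$ at all but instead use that $\Gamma$ automatically contains the full symmetric group on each twin-class.
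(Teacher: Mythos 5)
Your proof is correct and follows essentially the same route as the paper: show that automorphisms of $G$ preserve the twin relation and hence descend to automorphisms of $G'$, and conversely lift automorphisms of $G'$ (the weight condition forcing matched classes to have equal size) combined with arbitrary permutations inside twin-classes. You simply spell out the bookkeeping (prescribed images in the lift, the same-class case via a transposition) that the paper's terser proof leaves implicit, and you avoid invoking transitivity of $\Gamma'$ to get globally equal class sizes by using per-class weight preservation instead — a harmless variant of the same idea.
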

\begin{proof}
If $\Gamma'$ acts transitively on $G'$ then $a'$ is equal to $m{\bf 1}$ for some $m\in \N$.
So all equivalence classes of $G$ are of equal size and permuting these classes using $\Gamma'$ yields an automorphism of $G$.
Since permuting elements inside an equivalence class is an automorphism of $G$, this implies that if $\Gamma'$ acts (generously) transitively on $G'$ then so does $\Gamma$ on $G$.

Conversely, suppose that $\Gamma$ acts (generously) transitively on $G$.
Then $\Gamma$ preserves the equivalence relation, i.e., if $i$ and $j$ are twins and $\gamma\in \Gamma$, then $\gamma(i)$ and $\gamma(j)$ are twins.
Indeed, for each $k\in [n]$, $B_{i,k}=B_{j,k}=B_{\gamma(j),\gamma(k)}=B_{\gamma(i),\gamma(k)}$ and by transitivity $\{\gamma(k)\mid k\in [n]\}=[n]$.
From this it follows that $\Gamma'$ acts (generously) transitively on $G'$, finishing the proof.
\end{proof}

We are now able to give a proof of Theorem \ref{thm:matroid invariant}
\begin{proof}[Proof of Theorem \ref{thm:matroid invariant}]
Let $G:=G({\bf 1},B)$ be a weighted graph such that $h(\cdot,G)$ defines a cycle matroid invariant.
We need to show that $\Gamma:=\Gamma({\bf 1},B)$ acts generously transitively on $G$.
By Lemma \ref{lem:twin groups} we may assume that $G$ is twin-free.
For $F_1,F_2\in \G_2$, the edges of $F_1\cdot F_2$ are in one-to-one correspondence with the edges of $F_1^T\cdot F_2$ such that circuits are maintained. Hence the cycle matroids of $F_1\cdot F_2$ and $F_1^T\cdot F_2$ are isomorphic.
By the assumption that $h(\,\cdot\,,G)$ is a cycle matroid invariant, this implies that $h(F_1\cdot F_2,G)=h(F_1^T\cdot F_2,G)$. As this holds for all $F_1,F_2\in \G_2$, Lemma \ref{lem:gen trans} implies that $\Gamma$ acts generously transitively on $G$.

For the converse we follow the line of proof given for Proposition 7 in~\cite{dlHJ95}. Suppose that $\Gamma:=\Gamma({\bf 1},G)$ acts generously transitively on $G:=G({\bf1 },B)$.
Again we may assume that $G$ is twin-free. 
Let us denote the cycle matroid of a graph $F$ by $M(F)$.
Whitney's $2$-isomorphism theorem \cite{W33} characterizes when two graphs $F,F'$ have the same cycle matroid in terms of the following three operations:
\begin{itemize}
\item [(i)] for $F_1,F_2\in \G_2$, the operation of transforming $F_1\cdot F_2$ into $F_1^T\cdot F_2$ (this operation is known as a \emph{Whitney flip});
\item [(ii)] for $F_1,F_2\in \G_1$, the operation of transforming the disjoint union of $F_1$ and $F_2$ into $F_1\cdot F_2$;
\item[(iii)] for $F_1,F_2\in \G_1$  the operation of transforming of $F_1\cdot F_2$ into the disjoint union of $F_1$ and $F_2$.
\end{itemize}
Then Whitney's $2$-isomorphism theorem states that $M(F)\cong M(F')$ if and only if $F$ can be obtained from $F'$ by applying a sequence of the three operations above.
Whitney's result together with Lemmas~\ref{lem:trans} and~\ref{lem:gen trans} clearly imply that if $M(F)=M(F')$ then $h(F,G)=h(F',G)$ when $G$ has a generously transitive automorphism group.
\end{proof}

\section{Concluding remarks}\label{sec:concl}

We have used the term {\em cycle matroid invariant} for a 
graph invariant whose value on a graph $F$ depends only on the cycle matroid of $F$. This is in order to avoid the confusion of a graph invariant which is an invariant of the underlying cycle matroid with the more widely defined matroid invariant of which it is the restriction to graphic matroids.
Indeed, for a given cycle matroid invariant it is not always known to which matroid invariant it extends. For example, on page 608 of~\cite{dlHJ95} the question is left open of identifying the matroid invariant that restricted to graphic matroids gives the cycle matroid invariant obtained by counting homomorphisms from a graph $F$ to a generalized Johnson graph or Grassmann graph. 

We elaborate this remark in the context of Cayley graphs. 

Let $A$ be an abelian group and $B$ a subset of $A$ such that $-B=B$. Let ${\rm Cayley}(A,B)$ denote the associated Cayley graph in which vertices $u$ and $v$ are joined by an edge when $v-u\in B$. The graph ${\rm Cayley}(A,B)$ has a generously transitive automorphism group (in additive notation, the graph automorphism $x\mapsto u+v-x$ swaps vertices $u$ and $v$). By Theorem~\ref{thm:matroid invariant} it follows that the number of $B$-tensions of a graph $F$ is an invariant of the cycle matroid of $F$, as also shown in ~\cite[Proposition 6]{dlHJ95}. (An $A$-tension of a graph $F$ with a fixed orientation of its edges is a function $E(F)\to A$ with the property that the sum of values on forward edges when traversing a circuit is equal to the sum of values on backward edges. A  $B$-tension of $F$ is an $A$-tension that takes values only in $B$.) A $B$-tension of a graph $F$ is defined in terms of the oriented cycle matroid of $F$, in which circuits are signed according to edge orientations in a fixed but arbitrary orientation of $F$. The number of $B$-tensions of a graph $F$ is however independent of the choice of orientation, and for this reason the number of $B$-tensions is in this case properly a cycle matroid invariant.
By using the definition of signed circuits in an oriented matroid, $B$-tensions can be defined more generally for orientable~matroids. 

For example, the graph ${\rm Cayley}(\mathbb Z_n,\mathbb Z_n\setminus\{0\})$ is isomorphic to $K_{n}$ and $\mathbb Z_n\setminus\{0\}$-tensions of $F$ are nowhere-zero $\mathbb Z_n$-tensions, to each of which correspond $n^{c(F)}$ proper vertex $n$-colourings of $F$. Our graph invariant $h(\,\cdot\,,G)$ here for $G=K_n$ is given by $h(F,K_n)=(-1)^{r(F)}T(F;1-n,0)$. 
However, it is not the case for an oriented matroid $F$ in general that $(-1)^{r(F)}T(F;1-n,0)$ equals the number of nowhere-zero $\mathbb Z_n$-tensions of $F$: it is only on the class of graphic matroids that the matroid invariant $(-1)^{r(F)}T(F;1-n,0)$ has this interpretation.
Therefore the cycle matroid invariant $h(F,K_n)$ defined for graphs $F$ can be extended in two ways to a larger class of matroids: to the class of all matroids by setting $h(F,K_n)=(-1)^{r(F)}T(F;1-n,0)$, or to the class of orientable matroids $F$ (together with an orientation class -- see observation (5) at the end of~\cite{GTZ98}) by setting $h(F;K_n)$ equal to the number of nowhere-zero $\mathbb Z_n$-tensions of $F$ (with an orientation chosen arbitrarily from the given orientation class of~$F$).

Another reason thus emerges for using the term {\em cycle matroid invariant} rather than matroid invariant  for a graph invariant that depends only on the underlying cycle matroid: 
the definition of a given invariant for graphs may not extend to all finite matroids, as it may depend on some extra structure attached to graphic matroids (such as an orientation) even though for graphic matroids its value may be independent of this additional structure (as for nowhere-zero tensions). 

In other words, we have seen how a cycle matroid invariant may also be the restriction of an oriented matroid invariant (which has the property that it is a matroid invariant when restricted to graphic matroids), as well as being the restriction of a matroid invariant (different from the oriented matroid invariant).

\subsection*{Acknowledgement}
The authors are grateful to the referee for a careful reading of the manuscript.

\end{document}